\newcommand{\conv}{\operatorname{conv}}
\newcommand{\Ray}[1]{\overrightarrow{\rule{0pt}{1.25ex}#1}}
\newcommand{\norm}[1]{\left\lVert#1\right\rVert}
\newcommand{\numbersystem}[1]{\mathbb{#1}}
\newcommand{\R}{\numbersystem{R}}
\newcommand{\N}{\mathcal{N}}
\newcommand{\M}{\mathcal{M}}
\newcommand{\abs}[1]{\lvert#1\rvert}
\newcommand{\myangle}{\sphericalangle}
\newcommand{\ipr}[2]{\left\langle #1, #2 \right\rangle}
\theoremstyle{plain}
\newtheorem{theorem}{Theorem}
\newtheorem{proposition}{Proposition}
\newtheorem{lemma}{Lemma}
\newtheorem{corollary}{Corollary}
\begin{document}

\bibliographystyle{amsplain}

\title{Absorbing angles, Steiner minimal trees, and antipodality}
\thanks{Research supported by a grant from an agreement between the Deutsche For\-schungs\-ge\-mein\-schaft in Germany and the National Research Foundation in South Africa. 
}
\author{Horst Martini}
\address{Fakult\"at f\"ur Mathematik,
        Technische Universit\"at Chemnitz,
        D-09107 Chemnitz, Germany}
\email{martini@mathematik.tu-chemnitz.de}
\author{Konrad J. Swanepoel}
\address{Fakult\"at f\"ur Mathematik,
        Technische Universit\"at Chemnitz,
        D-09107 Chemnitz, Germany}
\email{konrad.swanepoel@gmail.com}
\author{P. Oloff de Wet}
\address{Department of Decision Sciences,
	University of South Africa, PO Box 392,
	UNISA 0003, South Africa}
\email{dwetpo@unisa.ac.za}
\maketitle

\begin{abstract}
We give a new proof that a star $\{op_i:i=1,\dots,k\}$ in a normed plane is a Steiner minimal tree of its vertices $\{o,p_1,\dots,p_k\}$ if and only if all angles formed by the edges at $o$ are absorbing [Swanepoel, Networks \textbf{36} (2000), 104--113].
The proof is more conceptual and simpler than the original one.

We also find a new sufficient condition for higher-dimensional normed spaces to share this characterization.
In particular, a star $\{op_i: i=1,\dots,k\}$ in any CL-space
is a Steiner minimal tree of its vertices $\{o,p_1,\dots,p_k\}$ if and only if all angles are absorbing, which in turn holds if and only if all distances between the normalizations $\frac{1}{\norm{p_i}}p_i$ equal $2$.
CL-spaces include the mixed $\ell_1$ and $\ell_\infty$ sum of finitely many copies of $\R$. 

\medskip
Keywords: Steiner minimal tree, absorbing angle, antipodality, face antipodality, Minkowski geometry.
\end{abstract}

\section{Introduction}
\subsection{Minkowski geometry}
Let $\M^d$ denote a $d$-dimensional normed space (or \emph{Minkowski space}) with origin $o$, i.e., $\R^d$ equipped with a norm $\norm{\cdot}$.
We call an $\M^2$ a \emph{Minkowski plane}.
Denote the \emph{unit ball} by $B=\{x\in\R^d:\norm{x}\leq 1\}$.
The \emph{dual} $\M^d_\ast$ of $\M^d$ is $\R^d$ equipped with the \emph{dual norm}
\[\norm{x}_\ast:=\max_{\norm{y}\leq 1}\ipr{x}{y},\]
where $\ipr{\cdot}{\cdot}$ denotes the inner product on $\R^d$.
The \emph{dual unit ball}
\[B_\ast=\{x\in\R^d:\ipr{x}{y}\leq 1\;\forall y\in B\}\]
 is also known as the \emph{polar body} of $B$.

For example, the $d$-dimensional Minkowski spaces $\ell_1^d$ and $\ell_\infty^d$ are duals of each other, where $\ell_1^d$ has the norm $\norm{(x_1,\dots,x_d)}_1:=\sum_{i=1}^d\abs{x_i}$ and $\ell_\infty^d$ has the norm $\norm{(x_1,\dots,x_d)}_\infty:=\max\{\abs{x_i}: i=1,\dots,d\}$.

A vector $x_\ast\in\M^d_\ast$ is \emph{dual} to $x\in\M^d$, $x\neq o$, if $\norm{x_\ast}_\ast=1$ and $\ipr{x_\ast}{x}=\norm{x}$, i.e., $x_\ast$ is a dual unit vector that attains its norm at $x$.
In this case the hyperplane $\{x\in\R^d:\ipr{x_\ast}{x}=1\}$ supports the unit ball at $\frac{1}{\norm{x}}x$.
Any hyperplane supporting the unit ball at $\frac{1}{\norm{x}}x$ is given in this way by some $x_\ast$ dual to $x$.
A unit vector $v\in\M^d$ is a \emph{regular direction} if there is only one hyperplane that supports $B$ at $v$.
Note that the norm function $f(x):=\norm{x}$ is differentiable at $p\neq o$ if and only if $\frac{1}{\norm{p}}p$ is a regular direction, and then the gradient $\nabla f(p)$ is the unique vector in $\M^d_\ast$ dual to $p$.

The \emph{exposed face} of the unit ball $B$ defined by a unit vector $a^\ast\in\M^d_\ast$ is
\[ [a^\ast] := \{a\in B: \ipr{a}{a^\ast}=1\} .\]
Similarly, a unit vector $a\in\M^d$ defines an exposed face $[a]^\ast$ of $B^\ast$.
If $B$ is a polytope then all faces are exposed, and each face $F$ of $B$ corresponds to a face $F^\ast$ of $B^\ast$ as follows:
\[ F^\ast := \{a^\ast\in B^\ast : \ipr{a}{a^\ast}=1\text{ for all } a\in F\}.\]

\subsection{Trees}
Let $S\subset\M^d$ be a finite, non-empty set of points.
A \emph{spanning tree} $T$ of $S$
is an acyclic connected graph with vertex set $S$.
Denote its edge set by $E(T)$.
A \emph{Steiner tree} $T$ of $S$ is a
spanning tree of some finite $V\subset \M^d$ such that $S\subseteq V$ and such that the degree of each vertex in $V\setminus S$ is at least $3$.
The vertices in $S$
are the \emph{terminals} of the Steiner tree $T$, and the
vertices in $V\setminus S$ the \emph{Steiner points} of $T$.
The \emph{length} of a tree $T$ in $\M^d$ is
\[\ell(T):=\sum_{xy\in E(T)}\norm{x-y}.\]
A \emph{Steiner minimal tree} (SMT) of $S$ is a Steiner tree of $S$ of smallest 
length.
The requirement that Steiner points have degree at least $3$ is for technical convenience, since Steiner points of degree at most $2$ can easily be eliminated using the triangle inequality without making the tree longer.
It is easily seen that the number of Steiner points is at most $\#S-2$.
It then follows by a simple compactness argument that any non-empty finite $S$ has a SMT.

A \emph{star} with center $s$ is a tree in which the vertex $s$ is joined to all other vertices.
If $s\in S$ has neighbors $s_1,\dots,s_k\in V$ in some SMT, then clearly the star joining $s$ to each $s_i$, $i=1,\dots,k$, is a SMT of $\{s,s_1,\dots,s_k\}$.
Thus, to characterize the neighborhoods of terminals in SMTs, it is sufficient to characterize SMTs which are stars with the center a terminal.
This is the intent of Theorem~\ref{planetheorem} below.

\subsection{Angles}
An angle $\myangle x_1x_0x_2$ in $\M^d$ is \emph{absorbing} if
the function \[x\mapsto\norm{x-x_0}+\norm{x-x_1}+\norm{x-x_2}\]
attains its minimum at $x_0$.
Thus $\myangle x_1x_0x_2$ is absorbing if and only if the star $\{x_0x_1,x_0x_2\}$ is a SMT of $\{x_0,x_1,x_2\}$.

\begin{lemma}\label{absorbing}
Let $a$ and $b$ be unit vectors in $\M^d$.
Then the following are equivalent:
\begin{align}
\label{11} &\text{$\myangle aob$ is absorbing.}\\
\label{22} &\text{There exist unit vectors $a^\ast$ and $b^\ast$ in $\M^d_\ast$ such that}\\
\notag &\text{ $\ipr{a^\ast}{a}=\ipr{b^\ast}{b}=1$ and $\norm{a^\ast+b^\ast}^\ast\leq 1$.}\\
\label{33} & \text{The exposed faces $[a]^\ast$ and $[-b]^\ast=-[b]^\ast$ of the dual unit ball}\\
\notag &\text{ are at distance $\leq 1$.}
\end{align}
\end{lemma}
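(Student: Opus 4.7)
My plan is to establish the cyclic equivalence by first handling (\ref{22}) $\Leftrightarrow$ (\ref{33}) as an unpacking of definitions, and then proving (\ref{11}) $\Leftrightarrow$ (\ref{22}) via elementary convex duality.

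For (\ref{22}) $\Leftrightarrow$ (\ref{33}), I would unpack definitions. The exposed face $[a]^\ast = \{a^\ast\in B^\ast : \ipr{a^\ast}{a}=1\}$ is precisely the set of unit dual vectors dual to $a$; similarly $-[b]^\ast = [-b]^\ast$ consists of $-b^\ast$ for $b^\ast$ dual to $b$. Hence the distance between $[a]^\ast$ and $-[b]^\ast$ measured in $\norm{\cdot}_\ast$ equals
\[ \min\bigl\{\norm{a^\ast-(-b^\ast)}_\ast : a^\ast\in[a]^\ast,\ b^\ast\in[b]^\ast\bigr\} = \min\norm{a^\ast+b^\ast}_\ast, \]
which is $\leq 1$ iff (\ref{22}) holds.

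For (\ref{22}) $\Rightarrow$ (\ref{11}), I would use three uses of the basic duality inequality $\norm{y}\geq\ipr{v}{y}$ valid for $\norm{v}_\ast\leq 1$. Given $a^\ast,b^\ast$ as in (\ref{22}), for any $x$,
\[ \norm{x} \geq \ipr{a^\ast+b^\ast}{x}, \quad \norm{x-a} \geq \ipr{-a^\ast}{x-a} = 1-\ipr{a^\ast}{x}, \quad \norm{x-b} \geq 1-\ipr{b^\ast}{x}. \]
Summing yields $\norm{x}+\norm{x-a}+\norm{x-b}\geq 2$, with equality at $x=o$, so $\myangle aob$ is absorbing.

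For (\ref{11}) $\Rightarrow$ (\ref{22}), the function $f(x):=\norm{x}+\norm{x-a}+\norm{x-b}$ is convex and attains its minimum at $o$, so $0\in\partial f(o)$. By the sum rule and the standard description of the subdifferential of a norm, $\partial\norm{\cdot}(o)=B^\ast$, $\partial\norm{\cdot-a}(o)=-[a]^\ast$, and $\partial\norm{\cdot-b}(o)=-[b]^\ast$; hence there exist $c^\ast\in B^\ast$ and dual vectors $a^\ast$ to $a$, $b^\ast$ to $b$, with $c^\ast-a^\ast-b^\ast=o$. Then $\norm{a^\ast+b^\ast}_\ast = \norm{c^\ast}_\ast\leq 1$, giving (\ref{22}).

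The only mild obstacle is invoking the subdifferential sum rule cleanly; it is standard here since the three summands are finite and convex on all of $\R^d$. An alternative is to pick a direction $v$ and note that the right-sided directional derivative $f'(o;v)\geq 0$ for all $v$, then extract the required $a^\ast$, $b^\ast$, $c^\ast$ via a supporting-hyperplane argument — but this is really subdifferential calculus in disguise.
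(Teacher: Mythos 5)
Your proof is correct, but it takes a different route from the paper: the paper does not argue \eqref{11}$\Leftrightarrow$\eqref{22} at all, instead citing it as part of Lemma~5.4 of \cite{MSW} (the Fermat--Torricelli characterization specialized to three points with the minimizer at a terminal), and dismissing \eqref{22}$\Leftrightarrow$\eqref{33} as trivial. Your treatment of \eqref{22}$\Leftrightarrow$\eqref{33} matches what the paper considers trivial (with the distance taken in the dual norm, and the minimum attained because the exposed faces are compact). For \eqref{11}$\Leftrightarrow$\eqref{22} you give a self-contained argument: the direction \eqref{22}$\Rightarrow$\eqref{11} by summing three instances of $\ipr{v}{y}\leq\norm{y}$ for $\norm{v}_\ast\leq1$, with equality of the resulting lower bound $2$ at $x=o$; and \eqref{11}$\Rightarrow$\eqref{22} by $0\in\partial f(o)$, the Moreau--Rockafellar sum rule (valid here since all three summands are finite convex functions on $\R^d$), and the identifications $\partial\norm{\cdot}(o)=B^\ast$ and $\partial\norm{\cdot-a}(o)=-[a]^\ast$, $\partial\norm{\cdot-b}(o)=-[b]^\ast$. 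This is essentially the standard duality/subdifferential argument underlying the cited lemma in \cite{MSW}, so the mathematical content is the same at heart; what your version buys is that the lemma becomes self-contained within this paper rather than outsourced, while the paper's citation buys brevity and the extra generality of the Fermat--Torricelli result (arbitrarily many terminals) proved in \cite{MSW}.
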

\begin{proof}
\eqref{11}$\Leftrightarrow$\eqref{22} is part of Lemma~5.4 in \cite{MSW}.
\eqref{22}$\Leftrightarrow$\eqref{33} is trivial.
\end{proof}

In particular, this is a property of the angle alone:
\begin{corollary}
If $y_i$ is a point on the ray $\Ray{x_ox_i}$, $i=1,2$, then $\{x_0x_1,x_0x_2\}$ is a SMT of $\{x_0,x_1,x_2\}$ if and only if $\{x_0y_1,x_0y_2\}$ is a SMT of $\{x_0,y_1,y_2\}$. \cite[Proposition~3.3]{MSW}.

Furthermore, an angle containing an absorbing angle is itself absorbing.
\end{corollary}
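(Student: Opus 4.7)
The plan is to deduce both assertions from Lemma~\ref{absorbing}. For the first statement, conditions~\eqref{22} and~\eqref{33} depend only on the unit vectors $a=(x_1-x_0)/\norm{x_1-x_0}$ and $b=(x_2-x_0)/\norm{x_2-x_0}$, which are unchanged if $x_i$ is replaced by any $y_i$ on the ray $\Ray{x_0x_i}$. Hence, by the equivalence of \eqref{11} and \eqref{22}, being absorbing is a property of the pair of rays alone.

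For the furthermore statement I would work in the $2$-dimensional plane containing the four rays (so that ``contains'' has its usual planar meaning) and, by the first part, rescale so that $a,b,a',b'$ are all unit vectors. The hypothesis then says that $a$ and $b$ lie in the convex angular region bounded by $a'$ and $b'$. Widening one arm at a time, by symmetry it suffices to treat the case $b'=b$: assume that $\myangle aob$ is absorbing and that $a$ lies in the angular region of $\myangle a'ob$, and show that $\myangle a'ob$ is absorbing.

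I then pass to the dual via \eqref{33}: the hypothesis reads $\mathrm{dist}_\ast([a]^\ast,-[b]^\ast)\le 1$ in $\M^2_\ast$. Because $v\mapsto [v]^\ast$ preserves cyclic order between $\partial B$ and $\partial B^\ast$, the angular containment forces $[a']^\ast$, $[a]^\ast$, $[b]^\ast$ to occur in this cyclic order on $\partial B^\ast$, so that $[a']^\ast$ lies on the shorter arc of $\partial B^\ast$ from $[a]^\ast$ to $-[b]^\ast$. I would then invoke the classical planar Minkowski fact that, for a centrally symmetric convex body $K\subset\R^2$ and a fixed $r\in\partial K$, the map $q\mapsto\norm{q-r}_K$ is non-decreasing as $q$ moves along one arc of $\partial K$ from $r$ to $-r$. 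Applied in the dual with $K=B^\ast$ and $r$ a point of $-[b]^\ast$ closest to $[a]^\ast$, this yields $\mathrm{dist}_\ast([a']^\ast,-[b]^\ast)\le\mathrm{dist}_\ast([a]^\ast,-[b]^\ast)\le 1$, so $\myangle a'ob$ is absorbing by \eqref{33}.

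The main obstacle, I expect, is the degenerate case where the relevant exposed faces are non-trivial edges of $B^\ast$ (corresponding to vertices of $B$): one has to pick representative points inside each face lying on the correct arc to make the ``monotonicity along an arc'' step precise. If one prefers to sidestep the planar monotonicity principle, an alternative route starts from \eqref{22}: keep $b^\ast$ fixed and move $a^\ast$ continuously along $\partial B^\ast$ from $[a]^\ast$ towards $[a']^\ast$, verifying that $\norm{a^\ast+b^\ast}_\ast\le 1$ is preserved throughout; the same geometric input (smaller dual distance as one approaches $-b^\ast$) is what makes the argument work.
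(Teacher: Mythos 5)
Your first paragraph coincides with the paper's own justification of the first statement: the paper simply observes (citing [MSW, Proposition~3.3]) that the conditions \eqref{22} and \eqref{33} of Lemma~\ref{absorbing} depend only on the unit vectors $a$ and $b$, i.e.\ only on the rays, so that part is correct and is the same argument.

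For the ``furthermore'' part there is a genuine gap: you silently replace $\M^d$ by the two-dimensional subspace $P$ spanned by the four rays. Being absorbing is defined by a minimization over all of $\M^d$, and condition \eqref{33} refers to faces of the dual unit ball of $\M^d$. Restricting to $P$ replaces $\M^d_\ast$ by the quotient $\M^d_\ast/P^\perp$, not by a linear section: the faces $[a]^\ast,[b]^\ast$ get replaced by their images under the quotient map, and quotient distances can only decrease. Hence ``absorbing as an angle of the normed plane $(P,\norm{\cdot})$'' is a priori weaker than ``absorbing in $\M^d$''. The harmless direction (the hypothesis: $\myangle aob$ absorbing in $\M^d$ implies absorbing in $P$) is fine, but at the end you establish $\mathrm{dist}\bigl([a']^\ast,-[b]^\ast\bigr)\le 1$ only in $P^\ast$ and then conclude that $\myangle a'ob$ is absorbing in $\M^d$; by \eqref{22} this requires norm-one functionals on all of $\M^d$ norming $a'$ and $b$ whose sum has dual norm at most $1$, and Hahn--Banach extends each functional separately without any control on $\norm{a'^\ast+b^\ast}^\ast$. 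This lifting step is precisely the nontrivial point and is not addressed; as written, your argument proves the planar case of the corollary (which suffices for its use in the proof of Theorem~\ref{planetheorem}), but not the statement in the stated generality of $\M^d$, which is how Lemma~\ref{absorbing} and the reference to [MSW] are formulated. Within the plane your route is essentially sound and self-contained (the paper gives no proof beyond the citation): cyclic-order preservation of the duality map plus the non-strict monotonicity lemma in the dual plane do yield $\mathrm{dist}([a']^\ast,-[b]^\ast)\le\mathrm{dist}([a]^\ast,-[b]^\ast)$, modulo the degenerate-face bookkeeping you flag and a small WLOG on the labeling (one needs the cyclic order $a',a,b,b'$ to justify widening one arm at a time).
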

See also Proposition~3.3 and Lemma~5.4 of \cite{MSW}.

\subsection{The planar case}
We are now able to formulate the first result.
In any Minkowski space, all angles made by two incident edges of a SMT are clearly absorbing.
(For angles in a minimal spanning tree even more is true \cite{MS-AML}.)
Remarkably, as shown in \cite{Sw}, for a Minkowski plane the condition that all angles are absorbing is also sufficient for a star to be a SMT of its vertices.
\begin{theorem}\label{planetheorem}
Let $p_1,\dots,p_k\neq o$ be points in a Minkowski plane $\M^2$.
Then the star joining each $p_i$ to $o$ is a SMT of $\{o,p_1,\dots,p_k\}$ if (and only if) all angles $\myangle p_i o p_j$, $i\neq j$, are absorbing.
\end{theorem}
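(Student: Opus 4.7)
The ``only if'' direction is immediate, since two edges of an SMT at a common vertex form an absorbing angle by definition. For the ``if'' direction, my plan is to establish the inequality $\ell(T')\geq\sum_{i=1}^{k}\norm{p_i}$ for every Steiner tree $T'$ of $\{o,p_1,\dots,p_k\}$, via a dual-vector lower bound. Fix duals $p_i^\ast\in[p_i/\norm{p_i}]^\ast$ (to be chosen). Root $T'$ at $o$; for each non-root vertex $v$ with parent $u_v$, let $L_v\subseteq\{1,\dots,k\}$ be the set of indices of terminals lying in the subtree of $T'$ rooted at $v$, and set $e_v^\ast:=\sum_{i\in L_v}p_i^\ast$. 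A telescoping calculation, in which the flux cancels identically at every Steiner point and at $o$, gives
\[
\sum_{v\neq o}\ipr{e_v^\ast}{v-u_v}=\sum_{i=1}^{k}\ipr{p_i^\ast}{p_i}=\sum_{i=1}^{k}\norm{p_i},
\]
and combined with $\ipr{e_v^\ast}{v-u_v}\leq\norm{e_v^\ast}_\ast\,\norm{v-u_v}$ yields $\ell(T')\geq\sum_{i=1}^{k}\norm{p_i}$ whenever $\norm{e_v^\ast}_\ast\leq1$ for every edge of $T'$.

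The entire argument thus reduces to choosing the $p_i^\ast$ so that each subtree sum $\sum_{i\in L_v}p_i^\ast$ lies in $B_\ast$. Two-dimensionality enters here: by a routine uncrossing argument I may assume $T'$ is non-crossing and planar with $o$ on its outer face, and for such a $T'$ each $L_v$ is a contiguous arc of $\{p_1,\dots,p_k\}$ in their cyclic order around $o$. So one only needs $\norm{\sum_{i\in I}p_i^\ast}_\ast\leq1$ for every cyclic interval $I$. The absorbing hypothesis, via equivalence~\eqref{22} of Lemma~\ref{absorbing}, gives the consecutive pairwise form of this: $\norm{p_i^\ast+p_{i+1}^\ast}_\ast\leq1$ for some choice of the duals.

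The main obstacle, and where the new conceptual proof must do its real work, is to promote these local pairwise selections into one global choice of duals compatible with every cyclic interval. I would tackle this by studying the polygonal path $S_m:=\sum_{j\leq m}p_j^\ast$ in $\M^2_\ast$: cyclic interval sums are exactly the differences $S_b-S_a$, so the desired bound amounts to the set $\{S_0,\dots,S_k\}$ having dual diameter at most $1$. The exposed dual faces $[p_i/\norm{p_i}]^\ast$ appear on $\partial B_\ast$ in the same cyclic order as the $p_i$ around $o$, and in two dimensions this rigid cyclic ordering, combined with the consecutive-pair bounds, should force all of $\{S_0,\dots,S_k\}$ into a single translate of $B_\ast$. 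This two-dimensional diameter control is the heart of the proof.
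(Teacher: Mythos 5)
Your ``only if'' direction and the flux/telescoping framework are fine as far as they go: if duals $p_i^\ast$ can be fixed so that $\norm{\sum_{i\in L_v}p_i^\ast}_\ast\leq 1$ for every edge of a competitor tree, the lower bound $\ell(T')\geq\sum_i\norm{p_i}$ follows. But the step you yourself identify as the heart of the proof --- one global choice of duals making \emph{all} cyclic interval sums (equivalently, the partial-sum set $\{S_0,\dots,S_k\}$ having dual diameter at most $1$) lie in $B_\ast$ --- is not just unproven, it is false in general. Take the affine regular hexagonal norm, $B=\conv\{v_1,\dots,v_6\}$ with $v_1=(1,0)$, $v_2=(\tfrac12,\tfrac{\sqrt3}{2})$, $v_3=(-\tfrac12,\tfrac{\sqrt3}{2})$, and terminals $\{o,v_1,v_2,v_3\}$. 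All angles $\myangle v_iov_j$ are absorbing (check Lemma~\ref{absorbing}\eqref{33}: adjacent dual edges of the dual hexagon meet, and $v_1,v_3$ are antipodal), so the theorem applies and the star is an SMT; indeed this norm is the extreme case admitting degree~$6$. Now $B_\ast=\{u:\ipr{u}{v_j}\leq 1,\ j=1,\dots,6\}$ and $[v_2]^\ast$ is the segment from $(1,\tfrac{1}{\sqrt3})$ to $(0,\tfrac{2}{\sqrt3})$. Since $\ipr{v_1^\ast}{v_1}=1$, the constraint $\norm{v_1^\ast+v_2^\ast}_\ast\leq 1$ forces $\ipr{v_2^\ast}{v_1}\leq 0$, which on $[v_2]^\ast$ holds only at the endpoint $(0,\tfrac{2}{\sqrt3})$; symmetrically, $\norm{v_2^\ast+v_3^\ast}_\ast\leq 1$ forces $\ipr{v_2^\ast}{v_3}\leq 0$, which holds only at the other endpoint $(1,\tfrac{1}{\sqrt3})$. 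So no single choice of duals satisfies even the two consecutive-pair constraints, let alone all cyclic intervals; no ``two-dimensional diameter control'' can rescue this. The point is that the intervals $\{1,2\}$ and $\{2,3\}$ cross and therefore never occur together as subtree sets of one tree (those form a laminar family), so any viable duality certificate must choose the $p_i^\ast$ \emph{depending on the competitor tree}, which is a substantially different and harder statement than the one you propose. A secondary gap: the contiguity of the sets $L_v$ after uncrossing is only plausible for an SMT (not arbitrary trees) and needs a real argument, since the $p_i$ need not be in convex position --- only their cyclic order around $o$ is given.

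For comparison, the paper's proof avoids duality entirely at this stage: it takes an arbitrary SMT and modifies it without increasing length, first (Step~1) eliminating all Steiner points from the open sectors $\myangle p_iop_{i+1}$ via a compactness argument on a ``measure'' of Steiner points combined with a local perturbation lemma that uses regular directions (Lemma~\ref{l1}), and then (Step~2) removing every edge joining points on different rays using the inequality $\norm{p_i-p_j}\geq\norm{p_i}$, which follows directly from absorbency; what remains is forced to be the star. If you want to keep a duality-based route, you would have to construct the duals adaptively for each laminar family of intervals arising from a tree, and the hexagon example shows this adaptivity is unavoidable.
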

This result is used in \cite{Sw} to show that the maximum degree of a vertex in a SMT in a Minkowski plane is $6$, with equality only if the unit ball is an affine regular hexagon; for all other planes the maximum is $4$ if there exist supplementary absorbing angles, and $3$ otherwise.
The proof given in \cite{Sw} employs a long case analysis.
The new proof presented in Section~\ref{planeproof} is more conceptual.

\subsection{Antipodality and higher dimensions}
Theorem~\ref{planetheorem} does not hold anymore in Minkowski spaces of dimension at least $3$.
For example, let the unit ball be the projection of a $(d+1)$-cube along a diagonal. 
(When $d=3$, this is the rhombic dodecahedron.)
In this Minkowski space, the star joining $o$ to all $2^{d+1}-2$ vertices of the unit ball is not a SMT of these vertices if $d\geq 3$, despite all the angles being absorbing \cite{Sw-DCG}.
However, Theorem~\ref{planetheorem} extends to both $\ell_1^d$ and $\ell_\infty^d$.
Our second result is a generalization of this fact.

We first introduce some more notions, involving antipodality.
Two boundary points of the unit ball $B$ are \emph{antipodal} if there exist distinct parallel hyperplanes supporting the two points.
Equivalently, unit vectors $a$ and $b$ are antipodal if and only if $\norm{a-b}=2$.
\begin{lemma}
 If $a$ and $b$ are antipodal unit vectors in a Minkowski space, then $\myangle aob$ is an absorbing angle.
\end{lemma}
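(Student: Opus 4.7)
The plan is to give a direct proof from the definition of absorbing angle using the triangle inequality, rather than going through the characterizations in Lemma~\ref{absorbing}.

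By definition, I need to show that the function
\[ g(x) := \norm{x-o} + \norm{x-a} + \norm{x-b} = \norm{x} + \norm{x-a} + \norm{x-b} \]
attains its minimum at $x=o$. The value at the origin is $g(o) = 0 + \norm{a} + \norm{b} = 2$ since $a$ and $b$ are unit vectors. For arbitrary $x$, I would discard the non-negative term $\norm{x}$ and apply the triangle inequality to the remaining two terms:
\[ \norm{x-a} + \norm{x-b} \geq \norm{(x-a) - (x-b)} = \norm{b-a} = 2, \]
where the last equality uses the assumption that $a$ and $b$ are antipodal unit vectors. Hence $g(x) \geq 2 = g(o)$ for every $x$, so the minimum is attained at $o$, and $\myangle aob$ is absorbing.

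There is no real obstacle here; the proof is essentially a one-line consequence of the triangle inequality combined with the characterization of antipodality as $\norm{a-b}=2$. An alternative route would be via Lemma~\ref{absorbing}\eqref{22}: choose a dual unit vector $c^\ast$ attaining $\ipr{c^\ast}{a-b}=2$, from which it follows that $\ipr{c^\ast}{a}=1$ and $\ipr{-c^\ast}{b}=1$, so setting $a^\ast := c^\ast$ and $b^\ast := -c^\ast$ gives $a^\ast+b^\ast = o$, trivially satisfying $\norm{a^\ast+b^\ast}_\ast \leq 1$. I would present the triangle-inequality argument since it is self-contained and makes no appeal to duality.
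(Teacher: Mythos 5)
Your proof is correct and rests on the same key fact as the paper's: since $\norm{a-b}=2=\norm{a}+\norm{b}$, the path $a\to o\to b$ is already as short as possible, so no competitor can beat the star. The paper phrases this as ``$oa\cup ob$ is a shortest path from $a$ to $b$, hence a SMT of $\{o,a,b\}$,'' while you unwind it into the equivalent triangle-inequality bound $\norm{x}+\norm{x-a}+\norm{x-b}\geq\norm{a-b}=2$ for the Fermat sum; this is essentially the same argument in a slightly more explicit form.
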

\begin{proof}
 Since the union of the segments $oa$ and $ob$ form a shortest path from $a$ to $b$, these two segments form a SMT of $\{o,a,b\}$, hence $\myangle aob$ is absorbing.
\end{proof}
The converse of the above lemma is not necessarily true, as the Euclidean norm shows.
We call the unit ball of a Minkowski space \emph{Steiner antipodal} if two points $a$ and $b$ on the boundary of the unit ball are antipodal whenever $\myangle aob$ is absorbing.

\begin{theorem}\label{steinerantipodal}
Consider the following properties of a set $\{p_1,\dots,p_k\}$ of unit vectors in a Minkowski space $\M^d$.
\begin{align}
\label{1} &\text{All angles $\myangle p_iop_j$ are absorbing.}\\
\label{2} &\text{All distances $\norm{p_i-p_j}=2$.}\\
\label{3} &\text{The star $\bigcup_{i=1}^k[o,p_i]$ is a SMT of $\{p_1,\dots,p_k\}$.}\\
\label{4} &\text{The star $\bigcup_{i=1}^k[o,p_i]$ is a SMT of $\{o,p_1,\dots,p_k\}$.}
\end{align}
Then the implications \eqref{2}$\Rightarrow$\eqref{3}$\Rightarrow$\eqref{4}$\Rightarrow$\eqref{1} hold.
Furthermore, \eqref{1} to \eqref{4} are equivalent if, and only if, the norm is Steiner antipodal.
\end{theorem}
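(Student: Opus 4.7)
The plan is to establish the cyclic chain of implications \eqref{2}$\Rightarrow$\eqref{3}$\Rightarrow$\eqref{4}$\Rightarrow$\eqref{1}. Once this is done, all four conditions are equivalent precisely when \eqref{1}$\Rightarrow$\eqref{2} holds. Applied in the pair case $k=2$, this implication is exactly Steiner antipodality; conversely, Steiner antipodality gives \eqref{1}$\Rightarrow$\eqref{2} for every $k$ by applying the pair-wise fact to each pair $\{p_i,p_j\}$. This yields the ``iff'' part of the theorem.

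The main step is \eqref{2}$\Rightarrow$\eqref{3}. Given any Steiner tree $T$ of $\{p_1,\ldots,p_k\}$, the intrinsic tree metric $d_T$ dominates the ambient norm, so $d_T(p_i,p_j)\geq\norm{p_i-p_j}=2$ whenever $i\neq j$. I would consider the open balls $B_T(p_i,1):=\{x\in T:d_T(p_i,x)<1\}$, which are pairwise disjoint by the triangle inequality for $d_T$. The crucial claim is that each $B_T(p_i,1)$ has one-dimensional length at least $1$. Writing $F(r)$ for the finite set of points of $T$ at $d_T$-distance exactly $r$ from $p_i$, the length of $B_T(p_i,r)$ equals $\int_0^r\abs{F(s)}\,ds$. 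As $r$ grows, $\abs{F(r)}$ increases by at least $1$ at each Steiner point it passes, is non-decreasing at each terminal of degree $\geq 2$, and strictly decreases only at a leaf terminal of $T$. Since every other terminal sits at $d_T$-distance $\geq 2$ from $p_i$, no such decrease occurs for $r\in(0,2)$, so $\abs{F(r)}\geq 1$ on $(0,1)$. Summing the disjoint ball lengths over $i$ yields $\ell(T)\geq k$, matching the length of the star through $o$, so the star is an SMT of $\{p_1,\ldots,p_k\}$.

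The remaining implications are surgery arguments. For \eqref{3}$\Rightarrow$\eqref{4}: any Steiner tree of $\{o,p_1,\ldots,p_k\}$ of length less than $k$ could be converted into a Steiner tree of $\{p_1,\ldots,p_k\}$ of length less than $k$ by smoothing out $o$ whenever its degree in the tree is at most $2$ (which does not increase the total length, by the triangle inequality), contradicting \eqref{3}. For \eqref{4}$\Rightarrow$\eqref{1}: if some $\myangle p_iop_j$ were not absorbing, then a Steiner tree of $\{o,p_i,p_j\}$ of length less than $\norm{p_i}+\norm{p_j}=2$ would exist; splicing it in place of the two edges $[o,p_i]$ and $[o,p_j]$ of the star yields a Steiner tree of $\{o,p_1,\ldots,p_k\}$ of length less than $k$, contradicting \eqref{4}.

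The main obstacle is the one-dimensional length bound on $B_T(p_i,1)$ in \eqref{2}$\Rightarrow$\eqref{3}: one must classify each vertex of $T$ (Steiner point, internal terminal, or leaf terminal) and verify that $\abs{F(r)}$ can decrease only at a leaf terminal, which by hypothesis cannot appear before $r=2$. Once this packing estimate is in hand, the rest of the theorem follows from routine surgery and the formal logic connecting Steiner antipodality to the pair-wise implication \eqref{1}$\Rightarrow$\eqref{2}.
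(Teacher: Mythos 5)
Your proof is correct and follows essentially the same route as the paper: the same chain \eqref{2}$\Rightarrow$\eqref{3}$\Rightarrow$\eqref{4}$\Rightarrow$\eqref{1} (with the same surgery arguments, including smoothing out $o$ as a low-degree Steiner point), plus the observation that the missing implication \eqref{1}$\Rightarrow$\eqref{2} is precisely the definition of Steiner antipodality. The only difference is in \eqref{2}$\Rightarrow$\eqref{3}: the paper packs the pairwise disjoint \emph{ambient} open unit balls centered at the $p_i$ (the tree must reach from each center to the boundary of that ball, hence has length at least $1$ inside it), whereas you run the same packing estimate with intrinsic tree-metric balls and a wavefront count, which is valid but somewhat more elaborate than needed.
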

\begin{proof}
\eqref{3}$\Rightarrow$\eqref{4} is true, since all Steiner trees of $\{o,p_1,\dots,p_k\}$ are also Steiner trees of $\{p_1,\dots,p_k\}$, by considering $o$ to be a Steiner point.

\eqref{4}$\Rightarrow$\eqref{1} holds, since if any star $[o,p_i]\cup[o,p_j]$ can be shortened, it would also shorten the star $\bigcup_{i=1}^k[o,p_i]$.

The implication \eqref{1}$\Rightarrow$\eqref{2} is equivalent to the definition of Steiner antipodality.

This leaves \eqref{2}$\Rightarrow$\eqref{3}.
Note that the given star is a Steiner tree of length $k$.
It is sufficient to show that all Steiner trees have length $\geq k$.
However, note that the open unit balls centered at the $p_i$ are pairwise disjoint, since $\norm{p_i-p_j}=2$ for distinct $i\neq j$.
Any Steiner tree will have to join each $p_i$ to the boundary of the unit ball with centre $p_i$.
The part of the Steiner tree inside this ball must therefore have length at least $1$.
It follows that the length of any Steiner tree must be at least $k$.
\end{proof}

In order to apply this result, we need a characterization of Steiner antipodal norms in terms of duality.

\begin{proposition}\label{steinerchar}
The following are equivalent in any Minkowski space $\M^d$:
\begin{align}
\label{a} &\text{The norm is Steiner antipodal.}\\
\label{c} &\text{The unit ball is a polytope and any two disjoint faces of}\\
\notag&\text{the dual unit ball are at distance $>1$.}
\end{align}
\end{proposition}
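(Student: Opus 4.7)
The plan is to translate the Steiner antipodal condition, via the equivalence \eqref{11}$\Leftrightarrow$\eqref{33} in Lemma~\ref{absorbing}, into a statement about exposed faces of the dual unit ball $B^\ast$. The extra observation I need is that $\norm{a-b}=2$ if and only if $[a]^\ast\cap[-b]^\ast\neq\emptyset$: $\norm{a-b}=2$ exactly when some dual unit vector $c^\ast$ evaluates to $1$ on $a$ and on $-b$, i.e.\ $c^\ast$ lies simultaneously in both exposed faces. Combined with Lemma~\ref{absorbing}, this shows that the norm is Steiner antipodal iff any two disjoint exposed faces of $B^\ast$ of the form $[a]^\ast$ and $[-b]^\ast$ lie at dual distance $>1$.

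The direction \eqref{c}$\Rightarrow$\eqref{a} is then immediate: if $B$ is a polytope then so is $B^\ast$, every face of $B^\ast$ is exposed and of the form $[a]^\ast$ for some unit vector $a$, so the hypothesis of \eqref{c} yields the reformulated Steiner antipodality, with an intersection point $c^\ast\in[a]^\ast\cap[-b]^\ast$ certifying $\norm{a-b}=2$ via $\ipr{a-b}{c^\ast}=2$.

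For \eqref{a}$\Rightarrow$\eqref{c} I would first note that every exposed face of $B^\ast$ has the form $[a]^\ast$ for a unit vector $a$: an arbitrary supporting hyperplane of $B^\ast$ can be written as $\{x^\ast:\ipr{a}{x^\ast}=1\}$ with $\norm{a}=1$, since $\max_{x^\ast\in B^\ast}\ipr{a}{x^\ast}=\norm{a}$. Hence Steiner antipodality immediately gives dual distance $>1$ for every pair of disjoint \emph{exposed} faces, and the only remaining task is to show that $B$ is a polytope, so that ``face'' and ``exposed face'' coincide.

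To show $B$ is a polytope I would argue by contradiction. If $B^\ast$ is not a polytope, it has infinitely many extreme points, so by Straszewicz's theorem the exposed points of $B^\ast$ form an infinite set, which therefore accumulates somewhere. Pick distinct exposed points $e^\ast_n, e^\ast_m$ of $B^\ast$ at mutual dual distance less than $1$, and unit vectors $a_n,a_m\in\M^d$ with $[a_n]^\ast=\{e^\ast_n\}$ and $[a_m]^\ast=\{e^\ast_m\}$; setting $a=a_n$ and $b=-a_m$ produces disjoint singleton exposed faces $[a]^\ast$ and $[-b]^\ast$ at dual distance $<1$. By Lemma~\ref{absorbing} the angle $\myangle aob$ is absorbing, yet $[a]^\ast\cap[-b]^\ast=\emptyset$ forces $\norm{a-b}<2$, contradicting Steiner antipodality. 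I expect the main obstacle to be precisely this last step: one must produce disjoint \emph{exposed singleton} faces of $B^\ast$ at arbitrarily small dual distance, and Straszewicz's theorem, promoting extreme points to exposed ones, seems the key tool.
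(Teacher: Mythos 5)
Your argument is correct and follows essentially the same route as the paper: you reformulate Steiner antipodality via Lemma~\ref{absorbing} (together with the observation that $\norm{a-b}=2$ iff $[a]^\ast\cap[-b]^\ast\neq\emptyset$) as a condition on distances between disjoint exposed faces of the dual ball, and you obtain the polytope property from the existence, in a non-polytope, of disjoint exposed faces arbitrarily close together. The only difference is that the paper merely asserts this last fact, whereas you supply a proof of the version you need (nearby disjoint singleton exposed faces) via Straszewicz's theorem.
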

\begin{proof}
\eqref{a}$\Leftarrow$\eqref{c} is immediate from the definition of Steiner antipodality and Lemma~\ref{absorbing}.
\eqref{a}$\Rightarrow$\eqref{c} follows upon noting that if a convex body is not a polytope, then there are disjoint exposed faces that are arbitrarily close to each other. 
\end{proof}

A Minkowski space is a \emph{CL-space} if for every maximal proper face $F$ of the unit ball $B$ we have $B=\conv(F\cup(-F))$.
It is easily seen from finite dimensionality that the unit ball of a CL-space is a polytope.
CL-spaces were introduced by R. E. Fullerton (see \cite{Reisner}), although the notion has been studied before by Hanner \cite{Hanner}, who proved that the unit balls of CL-spaces are $\{0,1\}$-polytopes.
McGregor \cite{McGregor} showed that CL-spaces are exactly those spaces with numerical index $1$.
What is important for our purposes is that CL-spaces turn out to be Steiner antipodal.

Hanner \cite{Hanner} identified an important subclass of CL-spaces, namely those that can be built up from the one-dimensional space $\R$ using $\ell_1$-sums and $\ell_\infty$-sums.
For two Minkowski spaces $\M$ and $\N$ of dimension $d$ and $e$ we define their \emph{$\ell_1$-sum} $M\oplus_1\N$ and \emph{$\ell_\infty$-sum} $M\oplus_\infty\N$ to be the Minkowski spaces on $\R^{d+e}$ with norms $\norm{(x,y)}_1=\norm{x}+\norm{y}$ and $\norm{(x,y)}_\infty=\max\{\norm{x},\norm{y}\}$.
Note that the unit ball of $\M\oplus_1\N$ is the convex hull of the unit ball of $\M$ when embedded as $\M\oplus\{o\}$ and the unit ball of $\N$ when embedded as $\{o\}\oplus\N$.
The unit ball of $\M\oplus_\infty\N$ is the Cartesian product of the unit balls of $\M$ and $\N$.
The unit balls of these spaces are called \emph{Hanner polytopes}.
We thus introduce the name \emph{Hanner space} for these spaces.
For more information see \cite{Hanner, Hansen-Lima, Hansen, Reisner}.

We summarize the above discussion as follows.

\begin{proposition}
All Hanner spaces are CL-spaces.
All CL-spaces are Steiner antipodal.
\end{proposition}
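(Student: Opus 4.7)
The proposition has two parts, to be proved separately.

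For \emph{Hanner $\Rightarrow$ CL}, I would induct on the Hanner construction. The base case $\R$ is trivial. For the inductive step assuming $\M,\N$ are CL, the facets of $B_{\M\oplus_1\N}$ have the form $\conv(F\cup G)$ with $F$ a facet of $B_\M$ and $G$ a facet of $B_\N$; the inductive hypothesis then yields $\conv(\conv(F\cup G)\cup\conv(-F\cup -G))=\conv(B_\M\cup B_\N)=B_{\M\oplus_1\N}$. The facets of $B_{\M\oplus_\infty\N}$ have the form $F\times B_\N$ or $B_\M\times F$, and using central symmetry of the inactive factor together with the inductive hypothesis gives $\conv((F\times B_\N)\cup(-F\times B_\N))=\conv(F\cup -F)\times B_\N=B_\M\times B_\N$.

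For \emph{CL $\Rightarrow$ Steiner antipodal}, I would invoke Proposition~\ref{steinerchar}. Since the unit ball of a CL-space is a polytope (noted in the excerpt), it remains to show that any two disjoint faces of $B_\ast$ are at distance strictly greater than $1$. The pivotal observation is the equivalent reformulation of the CL property: $\ipr{v^\ast}{v}\in\{-1,+1\}$ for every vertex pair $v\in V(B)$ and $v^\ast\in V(B_\ast)$, which follows because applying CL to the facet $[v^\ast]$ of $B$ forces every vertex of $B$ to lie in $[v^\ast]\cup -[v^\ast]$. The reformulation is symmetric in $B$ and $B_\ast$, so CL is self-dual. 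The proof then reduces to the following sub-claim: for disjoint faces $F_1,F_2$ of $B_\ast$, there is a vertex $v_0\in V(B)$ with $\ipr{u^\ast}{v_0}=+1$ for all $u^\ast\in F_1$ and $\ipr{u^\ast}{v_0}=-1$ for all $u^\ast\in F_2$. Given such $v_0$, for $x\in F_1$ and $y\in F_2$ we compute $\ipr{x-y}{v_0}=2$, hence $\norm{x-y}_\ast\geq 2>1$ (the distance in fact equals~$2$).

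The existence of this separating vertex $v_0$ is the main obstacle. In the base case $F_1=\{v_1^\ast\}$, $F_2=\{v_2^\ast\}$ (two distinct vertices) the proof is short: if no $v_0$ existed, every vertex of the facet $[v_1^\ast]$ of $B$ would also lie in $[v_2^\ast]$, so these two maximal proper faces would coincide, contradicting $v_1^\ast\neq v_2^\ast$. For the general case I plan induction on $|V(F_1)|+|V(F_2)|$. Since $F_1$ and $F_2$ are disjoint, dually the face of $B$ dual to $F_1$ is not contained in the face dual to $F_2$, so one can choose a vertex $w_0$ of $B$ in the former but not the latter. The facet $[w_0]^\ast$ of $B_\ast$ then contains $F_1$, and by CL applied to $B_\ast$ each vertex of $F_2$ lies in $[w_0]^\ast\cup -[w_0]^\ast$, with at least one on the $-[w_0]^\ast$ side by the choice of $w_0$. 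If every vertex of $F_2$ lies on the $-[w_0]^\ast$ side then $w_0$ itself is the separator, completing the induction; otherwise the two subfaces $F_2\cap[w_0]^\ast$ and $F_2\cap -[w_0]^\ast$ are both nonempty and strictly smaller, still disjoint from $F_1$, and the inductive hypothesis applies to each. The delicate final step, combining the two inductively obtained separators into one that works for the full $F_2$, is the technical heart of the proof and is where I expect a deeper structural result about CL-polytopes to be required, beyond pure face-lattice combinatorics.
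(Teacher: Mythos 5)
Your first part (Hanner $\Rightarrow$ CL) is fine: it is the standard induction over the $\oplus_1/\oplus_\infty$ construction, with the correct description of the facets of the two sums. The paper simply records this as well known with a reference, so there is nothing to object to there, beyond the fact that you prove more than you cite.

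The second part contains a genuine gap, and it sits exactly at the decisive point. Your reduction is correct: by Proposition~\ref{steinerchar}, and by your (nice, self-contained) observation that the vertex condition $\ipr{v^\ast}{v}\in\{-1,+1\}$ is symmetric in $B$ and $B_\ast$ --- which replaces the paper's citation of McGregor for the fact that the dual of a CL-space is a CL-space --- everything comes down to producing, for two disjoint faces $F_1,F_2$ of $B_\ast$, one vertex $v_0$ of $B$ with $\ipr{u^\ast}{v_0}=+1$ for all $u^\ast\in F_1$ and $=-1$ for all $u^\ast\in F_2$; equivalently, a facet of $B_\ast$ containing $F_1$ whose opposite facet contains $F_2$. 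But this separating-vertex lemma \emph{is} the content of the implication ``CL $\Rightarrow$ Steiner antipodal'', and your proposal establishes it only when both faces are vertices. The proposed induction does not close: splitting $F_2$ into $F_2\cap[w_0]^\ast$ and $F_2\cap(-[w_0]^\ast)$ accounts only for the vertices of $F_2$ (a relative interior point of $F_2$ can lie in neither piece when its vertices sit on both sides), and, more importantly, two separators obtained for the two pieces give no single functional, hence no lower bound on $\norm{x-y}_\ast$ when $y$ is a convex combination of vertices from both pieces; you acknowledge yourself that this combination step is open. So, as it stands, the proposal does not prove the second statement. For comparison, the paper argues at this point as follows: after dualizing, it takes a facet $F'$ of the (CL) unit ball containing the face $F$ and uses the defining identity $B=\conv(F'\cup(-F'))$, which forces every vertex of $B$ outside $F'$ into $-F'$, to conclude that the disjoint face $G$ lies in $-F'$ and is therefore at distance $2$ from $F$. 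Note that this conclusion is exactly your missing lemma (your $v_0$ is the vertex of $B$ dual to such a facet $F'$), and that the paper's wording of this step is very compressed: what is really needed is a facet through $F$ that avoids $G$, which is precisely where your induction stalls. So you have located the crux correctly, but locating it is not proving it; any completion of your route has to use the CL identity at the level of facets, as the paper does, rather than only the vertex pairing.
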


\begin{proof}
It is clear and well-known that Hanner spaces are CL-spaces (see, e.g., \cite{Reisner}).

It is also well-known that the dual of a CL-space is a CL-space as well \cite{McGregor}.
To prove the second part of the proposition, it is by Proposition~\ref{steinerchar} sufficient to show that any two disjoint faces $F$ and $G$ of the unit ball $B$ are at distance $>1$.
Suppose that $F$ is contained in the facet $F'$.
Then all vertices of $B$ disjoint from $F$ must lie in the opposite facet $-F'$.
It follows that $G\subseteq -F'$, and $F$ and $G$ are therefore at distance $2$.
\end{proof} 
 
\section{Proof of Theorem~\ref{planetheorem}}\label{planeproof}

\begin{lemma}\label{l1}
Let $\ell$ be a line passing through a Steiner point $s$ of a SMT $T$ in a Minkowski plane $\M^2$.
Assume that $\ell$ is parallel to a regular direction.
Then $T$ has edges incident to $s$ in both open half planes bounded by $\ell$.
\end{lemma}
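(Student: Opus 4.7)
The plan is to argue by contradiction: suppose no edge of $T$ at $s$ lies in $H^-$, so every such edge lies in $\overline{H^+}$, and show this forces $\deg s \leq 2$, contradicting $s$ being a Steiner point.

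I would first set up coordinates adapted to $\ell$. Let $a$ be a unit regular direction parallel to $\ell$, with unique dual $a^\ast$. Choose a nonzero dual functional $b'$ with $\ipr{b'}{a}=0$ so that $H^+=\{x:\ipr{b'}{x-s}>0\}$, and a vector $c$ so that $\{a,c\}$ and $\{a^\ast,b'\}$ are dual bases. For each edge direction $u_i=(s_i-s)/\norm{s_i-s}$ at $s$ write $u_i=\alpha_i a+\gamma_i c$ and a chosen dual as $u_i^\ast=\beta_i a^\ast+\delta_i b'$; the hypothesis reads $\gamma_i\geq 0$, and $\ipr{u_i^\ast}{u_i}=1$ becomes $\alpha_i\beta_i+\gamma_i\delta_i=1$, with $|\alpha_i|,|\beta_i|\leq 1$.

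The Fermat--Weber condition at the Steiner point $s$ provides duals $u_i^\ast$ of $u_i$ satisfying $\sum_i u_i^\ast=0$, so $\sum_i\delta_i=0$. The key step is that each $\delta_i\geq 0$: from $\gamma_i\delta_i=1-\alpha_i\beta_i\geq 0$, if $\gamma_i>0$ then $\delta_i\geq 0$ directly, while if $\gamma_i=0$ then $u_i=\pm a$ has unique dual $\pm a^\ast$ by regularity of $a$, giving $\delta_i=0$. Combined with $\sum\delta_i=0$, every $\delta_i=0$, so $u_i^\ast\in\{\pm a^\ast\}$ for all $i$, and correspondingly each $u_i$ lies in the exposed face $[a^\ast]\cup[-a^\ast]$ of the unit ball.

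Finally I would use the SMT property that any two edges at $s$ form an absorbing angle (otherwise, replacing them by a shorter Steiner tree of $\{s,s_i,s_j\}$ would shorten $T$). Since $\ell$ crosses $[a^\ast]$ transversally at $a$, the set $[a^\ast]\cap\overline{H^+}$ is a segment in which at most one point --- an endpoint of $[a^\ast]$ --- can have a non-singleton dual; so among any two distinct edges lying in this set at least one, call it $u_i$, has $a^\ast$ as its unique dual. Then for any dual $u_j^\ast$ of the other edge,
\[
\norm{a^\ast+u_j^\ast}_\ast \;\geq\; \ipr{a^\ast+u_j^\ast}{u_j} \;=\; \ipr{a^\ast}{u_j}+\ipr{u_j^\ast}{u_j} \;=\; 1+1 \;=\; 2,
\]
since $u_j\in[a^\ast]$ and $u_j^\ast$ is dual to $u_j$; by Lemma~\ref{absorbing} the angle $\myangle u_i o u_j$ is not absorbing, a contradiction. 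Hence at most one edge incident to $s$ lies in $[a^\ast]\cap\overline{H^+}$ and, symmetrically, at most one in $[-a^\ast]\cap\overline{H^+}$, forcing $\deg s\leq 2$. The hard part is the sign claim $\delta_i\geq 0$, where the regularity of $a$ is essential to dispose of the boundary case $u_i\in\ell$.
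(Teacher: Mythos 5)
Your proof is correct, but it takes a genuinely different route from the paper's. The paper argues by direct shortening: first, a rewiring across a transversal line shows that the edges at a Steiner point can never all lie in one \emph{open} half plane (no regularity needed), and then the only remaining bad configuration --- two opposite edges on $\ell$ with all others on one side --- is excluded by sliding $s$ along one of the remaining edges and using differentiability of the norm at the regular direction to obtain a first-order decrease in length. You instead invoke the dual first-order optimality (Fermat--Torricelli) condition $\sum_i u_i^\ast=0$ at the Steiner point; your sign argument $\delta_i\geq 0$ (with regularity of $a$ disposing of the boundary case $u_i=\pm a$) forces every chosen dual to be $\pm a^\ast$, hence every edge direction to lie in $[a^\ast]\cup[-a^\ast]$, and the absorbing-angle criterion of Lemma~\ref{absorbing} then allows at most one edge in each of $[a^\ast]\cap\overline{H^+}$ and $[-a^\ast]\cap\overline{H^+}$, contradicting $\deg s\geq 3$. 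This disposes of both of the paper's cases in one stroke and is arguably more conceptual, at the cost of importing the subdifferential characterization of Fermat--Torricelli minimizers from \cite{MSW}, which the paper's self-contained perturbation argument avoids. One small gloss: two distinct edges at $s$ could share the \emph{same} direction, and if that common direction happens to be the non-smooth endpoint of the face, your claim that one of the two has $a^\ast$ as unique dual fails; but then the same estimate, $\ipr{u^\ast+v^\ast}{u}=2$ for any two duals $u^\ast,v^\ast$ of the common direction $u$, shows a zero angle is never absorbing, so the count of at most one edge per face is unaffected.
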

\begin{proof}
Without any assumption on $\ell$, the edges incident to $s$ cannot all lie in the same open half plane bounded by $\ell$.
Indeed, such a tree can be shortened as follows (Fig.~\ref{fig1}).
\begin{figure}
\begin{center}
\begin{overpic}[scale=0.5, angle=-90]{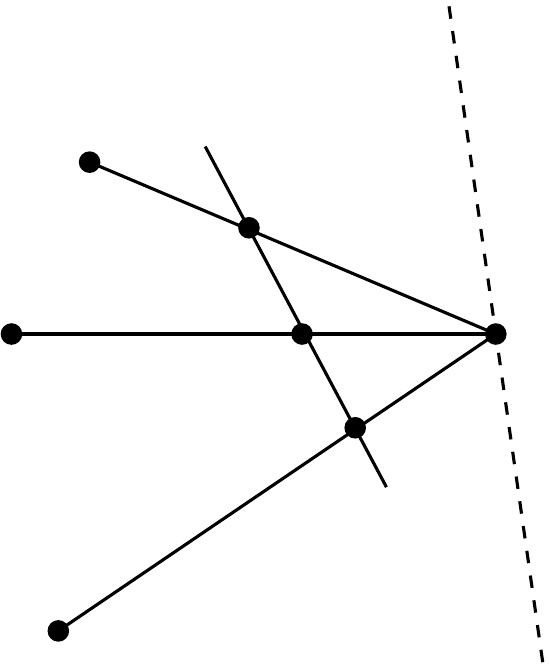}
\put(-7,73){$p_1$}
\put(35,80){$p_2$}
\put(81,71){$p_k$}
\put(22,29){$q_1$}
\put(38,40){$q_2$}
\put(68,37){$q_k$}
\put(80,51){$m$}
\put(90,16){$\ell$}
\put(45,-2){$s$}
\end{overpic}
\caption{}\label{fig1}
\end{center}
\end{figure}
Let some line $m$ intersect the interior of each edge $sp_i$, $i=1,\dots,k$, in $q_i$, say.
Remove edges $sp_1$, $sp_k$, and $sq_i$, $i=2,\dots,k-1$, and add edges $p_1q_2$, $q_iq_{i+1}$, $2\leq i\leq k-2$, and $q_{k-1}p_k$, to obtain a new Steiner tree $T'$, without the Steiner point $s$, but with new Steiner points $q_i$, $2\leq i\leq k-1$.
By the triangle inequality, $\ell(T)-\ell(T')\geq\sum_{i=2}^{k-1}\norm{sq_i}>0$, contradicting the minimality of $T$.

We now assume that $\ell$ is parallel to a regular direction.
It is sufficient to show that $sp_1$ and $sp_k$ cannot be opposite edges both on $\ell$, with all other $sp_i$, $2\leq i\leq k-1$, on the same side of $\ell$ (Fig.~\ref{fig2}).
\begin{figure}
\begin{center}
\begin{overpic}[scale=0.5, angle=-90]{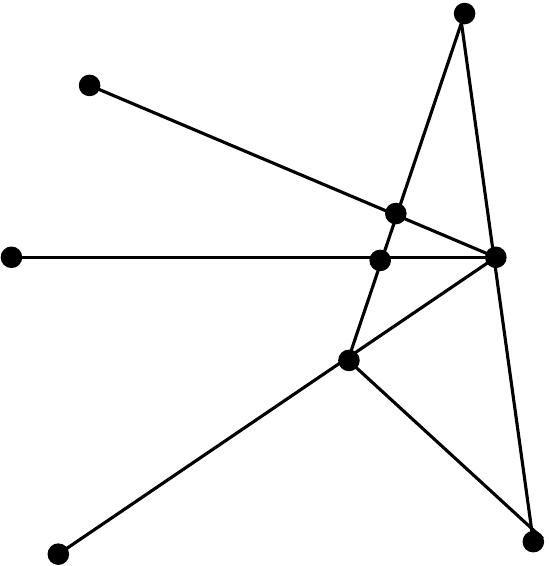}
\put(-2,10){$p_1$}
\put(103,16){$p_k$}
\put(-14,83){$p_2$}
\put(38,90){$p_3$}
\put(90,79){$p_{k-1}$}
\put(20,32){$s_2$}
\put(42,36){$s_3$}
\put(67,29){$s_{k-1}$}
\put(50,-2){$s$}
\end{overpic}
\caption{}\label{fig2}
\end{center}
\end{figure}
Let $s_2$ be a variable point on $sp_2$ with $\norm{s_2-s}$ small.
Denote the intersection of $s_2p_k$ and $sp_i$ by $s_i$, $i=3,\dots,k-1$.
Change the Steiner tree $T$ as follows.
Remove edges $p_1s$, $p_ks$ and $s_is$, $i=2,\dots,k-1$, and add edges $p_1s_2$ and $s_2p_k$.
This removes the Steiner point $s$ and introduces new Steiner points $s_2,\dots,s_{k-1}$.
Denoting the new tree by $T'$, it follows that the length changes by
\begin{align*}
& \ell(T')-\ell(T)\\
=& \norm{s_2-p_1}+\norm{s_2-p_k}-\norm{s-p_1}-\norm{s-p_k}-\sum_{i=2}^{k-1}\norm{s-s_i}\\
\leq&\norm{(s-p_1)+(s_2-s)}-\norm{s-p_1}\\
&+\norm{(s-p_k)+(s_2-s)}-\norm{s-p_k}-\norm{s_2-s}.
\end{align*}
Since $s-p_1$ and $s-p_k$ are parallel to a regular direction, the norm is differentiable at both points, i.e.,
\[ \lim_{s_2\to s}\frac{\norm{(s-p_1)+(s_2-s)}-\norm{s-p_1}}{\norm{s_2-s}} = \ipr{u_\ast}{s_2-s}\]
and
\[ \lim_{s_2\to s}\frac{\norm{(s-p_k)+(s_2-s)}-\norm{s-p_k}}{\norm{s_2-s}} = \ipr{-u_\ast}{s_2-s}.\]
(Since $s-p_1$ and $s-p_k$ are in opposite directions, their duals are opposite in sign.)
It follows that
\begin{align*}
\ell(T')-\ell(T) &\leq \ipr{u_\ast}{s_2-s}+\ipr{-u_\ast}{s_2-s}+o(\norm{s_2-s})-\norm{s_2-s}\\
&=o(\norm{s_2-s})-\norm{s_2-s},
\end{align*}
which is negative if $\norm{s_2-s}$ is sufficiently small.
Then $\ell(T')<\ell(T)$, a contradiction.
\end{proof}

\begin{proof}[Proof of Theorem~\ref{planetheorem}]
Without loss of generality, the segments $op_1,\dots, op_k$ are ordered around $o$.
Assume that all angles $\myangle p_i o p_j$ are absorbing.
We start off with an arbitrary SMT of $\{o,p_1,\dots,p_k\}$ and modify it in two steps without increasing the length.
In Step~1 we eliminate all Steiner points in the interiors of the angles $\myangle p_iop_{i+1}$.
In Step~2 we eliminate all edges between vertices on different rays $\Ray{op_i}$.
The edges of the final SMT are then all contained in the the union of the segments $op_i$, $i=1,\dots,k$.
This tree cannot have Steiner points, and so has to be the star with centre $o$.
This concludes the proof.

\smallskip
\textbf{Step 1:} For each angle $\myangle p_iop_{i+1}$ (where we let $k+1\equiv 0$), choose a regular direction $r_i$ not contained in the (closed) angle.
Choose $\widetilde{p_i}\in\Ray{op_i}$ and $\widetilde{q_i}\in\Ray{op_{i+1}}$ such that $\widetilde{p_i}\widetilde{q_i}$ is parallel to $r_i$ (Fig.~\ref{fig3}).
\begin{figure}
\begin{center}
\begin{overpic}[scale=0.5]{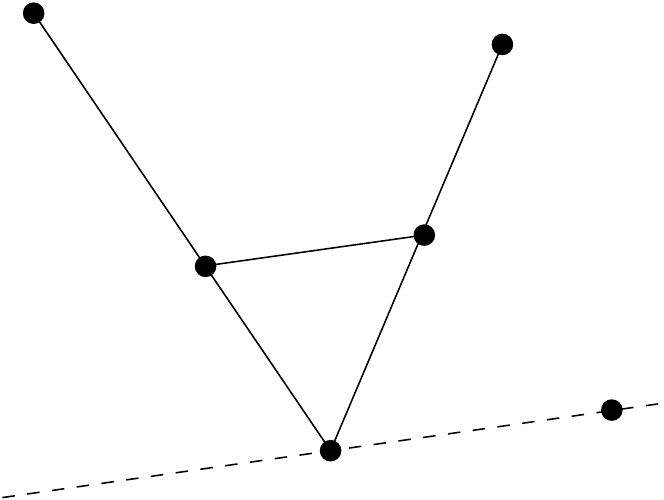}
\put(-7,73){$p_i$}
\put(81,69){$p_{i+1}$}
\put(18,33){$\widetilde{p_i}$}
\put(68,39){$\widetilde{q_i}$}
\put(90,19){$r_i$}
\put(45,-2){$o$}
\end{overpic}
\caption{}\label{fig3}
\end{center}
\end{figure}
For each point $s$ in the interior of $\myangle p_iop_j$, write $s=\alpha p+\beta q$ (uniquely, and then, moreover, $\alpha,\beta>0$) and define the \emph{measure} of $s$ to be
\[ \abs{s} := \alpha+\beta. \]
Define the \emph{measure} $\abs{T}$ of any Steiner tree $T$ of $\{o,p_1,\dots,p_k\}$ to be the sum of the measures of all Steiner points of $T$ not on any ray $\Ray{op_i}$.
Let
\[ \mu =\inf\{\abs{T}: \text{ $T$ is a SMT of $\{o,p_1,\dots,p_k\}$}\}.\]
Let $T_n$ be a sequence of SMTs of $\{o,p_1,\dots,p_k\}$ with $\lim_{n\to\infty}\abs{T_n}=\mu$.
Since there are only finitely many combinatorial types of Steiner trees on a set of $k+1$ points, we may, by passing to a subsequence, assume without loss of generality that all $T_n$ have the same combinatorial type with Steiner points $s_1^{(n)},\dots,s_m^{(n)}$, say.
By taking further subsequences, we may assume that each sequence of Steiner points converge, say $s_i^{(n)}\to s_i$, $i=1,\dots,m$.
In the limit we obtain a Steiner tree $T_0$ with $\ell(T_0)=\lim_{n\to\infty}\ell(T_n)$, hence $T_0$ is a SMT.
Also, $\abs{T_0}\leq\lim_{n\to\infty}\abs{T_n}$, since the measure of a Steiner point is continuous in the interior of an angle, hence $\lim_{n\to\infty}\abs{s_i^{(n)}}=\abs{s_i}$ if $s_i$ is still in the interior of the same angle, otherwise $\lim_{n\to\infty}\abs{s_i^{(n)}}\geq\abs{s_i}=0$ if $s_i$ is on one of the rays $\Ray{op_j}$.
Therefore, $\abs{T_0}=\mu$.
It remains to show that $\mu=0$, since this will imply that $T_0$ does not have any Steiner point in the interior of an angle.

Suppose that $\mu>0$.
We obtain a contradiction by constructing a SMT $T'$ with $\abs{T'}<\mu$.
Let $s$ be a Steiner point of $T_0$ in the interior of $\myangle p_iop_{i+1}$, say (Fig.~\ref{fig4}(a)).
\begin{figure}
\begin{center}
\begin{overpic}[scale=0.7, angle=-90]{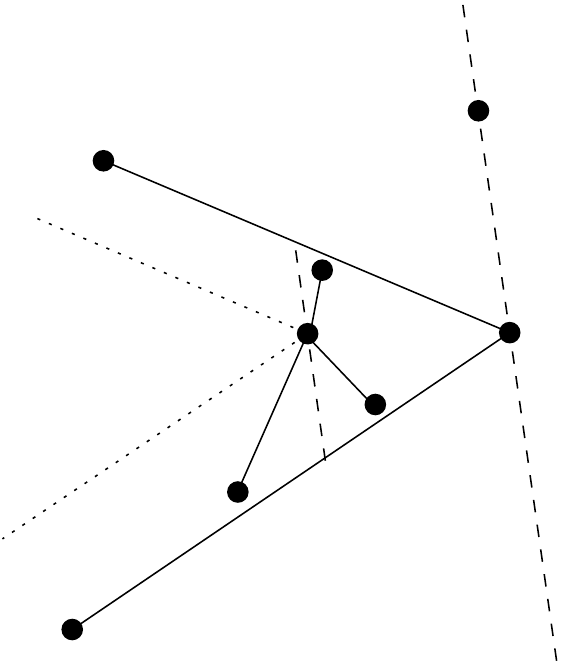}
\put(-4,73){$p_i$}
\put(79,69){$p_{i+1}$}
\put(85,16){$r_i$}
\put(47,0){$o$}
\put(53,40){$s$}
\put(31,37){$\ell$}
\put(41,23){$x_1$}
\put(63,33){$x_2$}
\put(25,52){$x_3$}
\put(45,-25){(a)}
\end{overpic}
\hspace{1cm}
\begin{overpic}[scale=0.7, angle=-90]{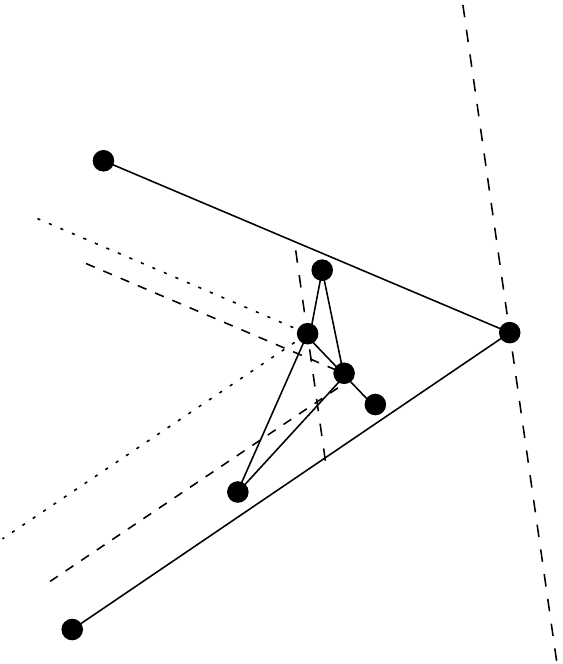}
\put(-4,73){$p_i$}
\put(79,69){$p_{i+1}$}
\put(47,0){$o$}
\put(29,23){$x_1$}
\put(63,33){$x_2$}
\put(13,46){$x_3$}
\put(53,40){$s$}
\put(45,26){$s'$}
\put(45,-25){(b)}
\end{overpic}
\vspace{1.5cm}
\caption{}\label{fig4}
\end{center}
\end{figure}
Without loss of generality there is no point of $T_0$ in the translated angle $s+\myangle p_iop_{i+1}$, since such a point is necessarily another Steiner point $s'$ and we may then repeatedly choose a new Steiner point $s''$ in $s'+\myangle p_iop_{i+1}$, until this procedure halts.

Let $\ell$ be the line through $s$ parallel to $r_i$.
The points  on $\ell$ in the interior of $\myangle p_iop_{i+1}$ all have the same measure, and the points on the same side of $\ell$ as $o$ have smaller measure.
By Lemma~\ref{l1} there is an edge $sx_1$ incident to $s$ on the same side of $\ell$ as $o$.
There are at least two more edges $sx_2$ and $sx_3$.
Since not all edges are in an open half plane bounded by a line through $s$, we may choose $x_2$ and $x_3$ such that the angle $\myangle x_2sx_3$ contains the translated angle $s+\myangle p_1op_2$ in its interior (with $s$ excluded).
It follows that there is a point $s'$ on $sx_1$ sufficiently close to $s$ such that $\myangle x_2s'x_3$ contains the translate $s'+\myangle p_1op_2$, and so is still absorbing (Figure~\ref{fig4}(b)).
We may therefore replace the edges $sx_2$, $sx_3$ and $ss'$ by $s'x_2$ and $s'x_3$ without lengthening $T_0$, to obtain a new SMT $T'$.
However, $\abs{s'}<\abs{s}$, hence $\abs{T'}<\abs{T}=\mu$, which gives the required contradiction.

\smallskip
\textbf{Step 2:}
Note that for any absorbing angle $\myangle p_i o p_j$,
\[\norm{p_i-o}+\norm{p_j-o}+\norm{o-o}\leq\norm{p_i-p_j}+\norm{p_j-p_j}+\norm{o-p_j},\]
i.e., $\norm{p_i-p_j}\geq\norm{p_i}$.

Suppose that the SMT $T$ has an edge between two points on different segments, say between $q_i$ on $op_i$ and $q_j$ on $op_j$.
Without loss of generality, the unique path in $T$ from $o$ to $q_i$ passes through $q_j$ (otherwise interchange $q_i$ and $q_j$).
Since $\myangle q_i o q_j$ is absorbing, $\norm{q_i-q_j}\geq\norm{q_i}$.
We can then replace the edge $q_i q_j$ by $oq_j$, without losing connectivity and without lengthening $T$.
This process may be repeated until all edges are on the segments $op_i$, which finishes Step 2.
\end{proof}

\end{document}